\newtheorem{theorem}{Theorem}[section]
\newtheorem{lemma}{Lemma}[section]
\newtheorem{corollary}{Corollary}[section]
\title{The Locating-Chromatic number of an $n$-ary Trees} 
\author
{Yusuf Hafidh $^{1}$, Edy Tri Baskoro $^{1,2,*}$, and Devi Imulia Dian Primaskun $^{1}$\\
	\\
	\normalsize{$^{1}$ \ Combinatorial Mathematics Research Group, Faculty of Mathematics }\\
        \normalsize and Natural Sciences, Institut Teknologi Bandung, Indonesia
\\
        \normalsize{$^{2}$ \ Center for Research Collaboration on Graph Theory and Combinatorics, Indonesia}\\
        \\
	\normalsize{Emails: yusuf.hafidh@itb.ac.id, ebaskoro@itb.ac.id, and imuliadevi@gmail.com}\\
}
\date{}
\begin{document}
\baselineskip20pt
\maketitle

\begin{abstract}
	The locating-chromatic number of a graph $G$ is the smallest integer $n$, such that $G$ has a proper $n$-coloring $c$ and all vertices have different vectors of distances to the colors generated by $c$.
    We study the asymptotic value of the locating-chromatic number of a $k$-level $n$-ary tree. The locating-chromatic number of this tree acts very differently when $k$ goes to infinity and when $n$ goes to infinity.
    If we fix $k\geq2$, almost all $n$-ary Tree $T(n,k)$ satisfy $\chi_L(T(n,k))=n+k-1$; so $\lim\limits_{n\to \infty} \chi_L(T(n,k))-n=k-1$. But if we fix $n\geq 2$, then $\chi_L(T(n,k))=o(k)$.
\end{abstract}

Keywords: $n$-ary Tree, locating-chromatic number, graph.

Mathematics Subject Classification: 05C12, 05C63, 05C15

\section{Introduction}

In general, the locating-chromatic number is not an increasing function, that is a subgraph does not always have a smaller locating-chromatic number than its supergraph. However, subgraphs of binary trees $T(2,k)$ have smaller locating-chromatic numbers than its binary tree supergraph \cite{sofyan}. Although the increasing property has not been generalized for any $n$-ary tree $T(n,k)$ for $n\geq3$, the locating-chromatic number of complete $n$-ary trees will contribute to the locating chromatic number of trees in general, because any tree $G$ is a subgraph of a complete $n$-ary tree $T(n,k)$ where $n=\Delta(G)$ and $k=\lceil diam(G)/2) \rceil$. Algorithms to optimize the locating-coloring of trees can be found in \cite{hilda, devi, Beh15, chr, DAM19, Hafidh2022, SLT75}. Locating chromatic number of specific graphs can be found in \cite{Beh11, AJC09}.

In this paper, we consider simple, undirected, and connected graphs. For general graph definitions, see \cite{Book17}. For $k\geq 1$, a \textit{proper k-coloring} of $G$ is a vertex $k$-coloring of $G$ such that every edge in $G$ joins two different colors. A color code of vertex $v$ in $G$ with respect to $c$, denoted by $a_c(v)$, is a vector of distances from $v$ to the nearest vertex of each color. If all vertices of $G$ have distinct color codes, then $c$ is called a locating $k$-coloring of $G$. The locating-chromatic number of $G$, denoted by $\chi_L(G)$, is the smallest positive integer $k$ such that $G$ has a locating $k-$coloring.

For integers $n\geq2$ and $k\geq 1$, we define an $n$-ary Tree $T(n,k)$ recursively with $T(n,1)=S_n$, and for $k\geq2$, $T(n,k)$ is obtained by connecting a new vertex to the center of $n$ disjoint $T(n,k-1)$, see Figure \ref{T43}.
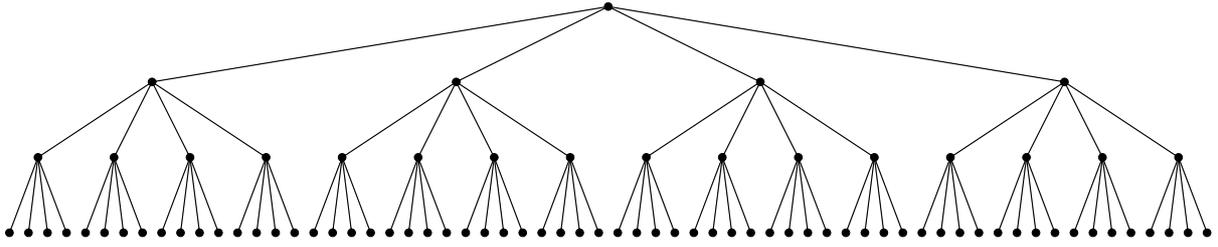
\begin{figure}[!h]
	\centering
		\begin{tikzpicture}[xscale=0.2]
		\def\n {4}
		\def\k {3}
		\tikzmath{\b=\n^\k/2;}
		\draw[fill] (\b,\k+1)circle(0.2 and 0.05);
		\foreach \y in {1,...,\k}
		{
			\tikzmath{\a=\n^(\k-\y+1);\b=\n^(\y-1);}
			\foreach \x in {1,...,\a}
			\tikzmath{\m=mod(\x-1,\n)+1;}
			\draw[fill] (\b*\x+\n*\b*0.5-\m*\b,\y+1)--(\b*\x-\b*0.5,\y)circle(0.2 and 0.05);
		}
		\end{tikzpicture}
		\caption{Graph $T(4,3)$}
		\label{T43}
\end{figure}


Welyyanti \textit{et al.} have proposed locating-chromatic number of complete $n-$ary trees $T(n,k)$ for $k=1,2,3$ and the upper bound for $n,k \geq 3$ in the following theorem.

\begin{theorem}
	\label{tnk13}
	\cite{welly}
	For integers $n\geq 2$ and $k\geq1$,
	\begin{align*}
		\chi_L(T(n,k))&=
		\begin{cases}
		n+1,\ \phantom{-1}\quad \text{for $k=1$}\\
		n+k-1,\quad \text{for $k=2,3$}
		\end{cases}\qquad \text{and}\\
		\chi_L(T(n,k))&\leq n+k-1,\qquad \text{for $k\geq 4$.}
	\end{align*}
\end{theorem}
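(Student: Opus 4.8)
The plan is to handle the three regimes with one structural lemma for all lower bounds and a single explicit recursive coloring for all upper bounds.

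I would first isolate the elementary fact that drives every lower bound: if two leaves $u,v$ share a common neighbor $p$, then $d(u,w)=d(v,w)$ for every vertex $w\notin\{u,v\}$, so any locating coloring must assign $u$ and $v$ distinct colors. Since each vertex at level $k-1$ has $n$ leaf-children, these $n$ leaves, together with the properness requirement that a leaf differ from $p$, force at least $n+1$ colors; hence $\chi_L(T(n,k))\ge n+1$ for every $k$. For $k=1$ (the star $S_n$) and for $k=2$ this bound is already tight, and I would simply exhibit the colorings: for $S_n$ give the center a private color and the leaves the remaining $n$ colors; for $T(n,2)$ give the root one color, the $n$ level-$1$ vertices pairwise distinct colors, and complete each leaf-block with the colors missing at its parent. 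A direct computation of the codes (each internal vertex has a single $0$ and otherwise $1$'s, each leaf has a single $0$, a single $1$, and $2$'s elsewhere) shows they are pairwise distinct.

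The first genuinely new point is $\chi_L(T(n,3))\ge n+2$. Assuming only $n+1$ colors, I would argue that at every level-$2$ vertex $p$ of color $a$ the $n$ leaf-children are forced to realize \emph{exactly} the $n$ colors different from $a$ (distinct by the lemma, and $\ne a$ by properness). For such a leaf $\ell$ of color $b$, its only neighbor is $p$, so the sole color at distance $1$ is $a$, every other color is realized by a sibling at distance $2$, and consequently the color code of $\ell$ is completely determined by the ordered pair $(a,b)$. There are at most $(n+1)n$ such pairs but $n^3$ leaves, and $n^3>(n+1)n$ for every $n\ge 2$; the pigeonhole principle then yields two leaves with equal codes, a contradiction. (The same computation gives $\ge n+2$ for all $k\ge 3$, although only $k=3$ is needed here.)

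For the upper bound $\chi_L(T(n,k))\le n+k-1$ — which simultaneously supplies the matching construction for $k=3$ — I would build an explicit locating $(n+k-1)$-coloring by induction on $k$. The governing principle is that the $n$ children of every internal vertex receive pairwise distinct colors; since a vertex has exactly $n+k-2$ admissible colors for its children and uses $n$ of them, exactly $k-2$ colors are \emph{missing} among its children, and for $k\ge 3$ this nonempty missing set, together with the vertex's own color, serves as a positional signature. Properness is immediate, and the crux is to verify that the distance-$1$ profile of a vertex — its parent's color together with its children's colors — reconstructs this signature from the code, while distances to the missing and ancestor colors separate vertices that share a signature but lie in different subtrees. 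I expect the main obstacle to be precisely this last verification: organizing the comparison of color codes into a clean case analysis (same level versus different levels, same subtree versus different subtrees) and confirming that the signatures can be chosen globally injective. Notably the counting is comfortable — the supply of signatures grows with $k$ while the population to be distinguished at each level is controlled — which is exactly why $n+k-1$ is far from tight for large $k$; the difficulty is bookkeeping rather than any scarcity of colors.
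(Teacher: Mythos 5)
First, a point of reference: the paper does not prove Theorem \ref{tnk13} at all --- it is quoted from Welyyanti \emph{et al.}~\cite{Tnk} --- so there is no internal proof to compare yours against. Judged on its own terms, your treatment of the lower bounds is sound: the sibling-leaf lemma gives $\chi_L(T(n,k))\geq n+1$ for every $k$, your explicit colorings for $k=1,2$ are correct (every leaf's code is the vector with a $0$ at its own color, a $1$ at its parent's color and $2$'s elsewhere, so the codes are visibly distinct), and the pigeonhole argument for $k=3$ --- $n^3$ leaves versus at most $n(n+1)$ codes determined by the pair (parent's color, own color) --- is exactly right.

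The gap is the upper bound $\chi_L(T(n,k))\leq n+k-1$ for $k\geq 3$, which is the substantive half of the theorem and which you leave as a plan rather than a proof: you neither specify the coloring (which colors, and which permutation of them, each subtree receives) nor carry out the verification that the codes are distinct, and you yourself flag that verification as ``the main obstacle.'' Worse, one ingredient of the plan is provably unavailable: the signatures (a vertex's color together with the $k-2$ colors missing among its children) \emph{cannot} be chosen globally injective, because at level $k-1$ there are $n^{k-1}$ internal vertices but only $(n+k-1)\binom{n+k-2}{n}$ possible signatures, and for fixed $n$ the latter is polynomial in $k$ while the former is exponential (already for $n=2$, $k=10$ one has $512>11\cdot\binom{10}{2}=495$). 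This is precisely the palm-counting pigeonhole that the paper itself exploits in Theorem \ref{Mk}, so your remark that ``the counting is comfortable'' is mistaken. Consequently all of the real work in the upper bound lies in showing that vertices sharing a signature are separated by distances to colors realized only in remote parts of the tree; that argument is the actual content of \cite{Tnk} and is missing from your proposal.
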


\section{Main Result}
	In this paper, we study the asymptotic properties of the locating-chromatic number of a $k$-level $n$-ary tree.
	First, we prove that for every integer $k\geq 4$, the upper bound given in Theorem \ref{tnk13} is achieved by almost all graphs $T(n,k)$.
	
	\begin{theorem}\label{Mk}
		For any fixed integer $k\geq 4$, almost all $n$-ary Tree $T(n,k)$ with $n\geq 2$ statisfy $\chi_L(T(n,k))=n+k-1$; in other words, there are only finitely many integers $n$ such that $\chi_L(T(n,k))<n+k-1$.
	\end{theorem}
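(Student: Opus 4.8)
The plan is to combine the known upper bound with a matching lower bound proved by a counting argument on the bottom two levels of the tree. Since Theorem~\ref{tnk13} already gives $\chi_L(T(n,k))\le n+k-1$ for every $k\ge 4$, it suffices to prove that $\chi_L(T(n,k))\ge n+k-1$ for all but finitely many $n$. Equivalently, I would fix $k\ge 4$, suppose for contradiction that a locating coloring $c$ of $T(n,k)$ uses only $m=n+k-2$ colors, and then produce two vertices with the same color code once $n$ is large. Throughout I write $t=k-2$, so $m=n+t$, and I call a depth-$(k-1)$ vertex (a vertex adjacent to $n$ leaves) a \emph{leaf-parent}; there are $n^{k-1}$ of them. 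For a leaf-parent $v$ let $S_v\subseteq\{1,\dots,m\}$ be the set of colors of its $n$ leaves (pairwise distinct by the Lemma, so $|S_v|=n$), and let $M_v=\{1,\dots,m\}\setminus S_v$ be its \emph{missing set}, with $|M_v|=t$.

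First I would record the exact shape of a leaf code. Since a leaf $\ell$ with parent $v$ and grandparent $w$ has $v$ as its only neighbour, its siblings at distance $2$, and $w$ at distance $2$, the color code of $\ell$ is completely determined by the triple $(c(\ell),c(v),S_v)$ together with the vector $\bigl(d(\ell,C_i)\bigr)_{i\in M_v}$ of distances to the $t$ missing colors (each entry $\ge 2$). Call $\tau(v)=\bigl(c(v),S_v,(d(\ell,C_i))_{i\in M_v}\bigr)$ the \emph{type} of $v$ (the last vector is the same for all leaves of $v$). The key structural lemma I would prove is: \emph{if two leaf-parents have the same type, the coloring is not locating.} Indeed, if $\tau(v)=\tau(v')$, then choosing any common color $p\in S_v=S_{v'}$ and the leaves $\ell\in v$, $\ell'\in v'$ of color $p$, a short case check over the three ranges ($j=p$; $j\in S_v\cup\{c(v)\}$; $j\in M_v$) gives $d(\ell,C_j)=d(\ell',C_j)$ for every color $j$, so $\ell$ and $\ell'$ share a color code. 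Hence all $n^{k-1}$ leaf-parents must receive \emph{pairwise distinct types}.

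This reduces the theorem to showing that the number of realizable types is strictly below $n^{k-1}$ once $n$ is large. The local part $(c(v),S_v)$ already carries the decisive factor: there are at most $m\binom{m}{n}=m\binom{n+t}{t}\sim \tfrac{1}{(k-2)!}\,n^{k-1}$ such pairs, which is \emph{below} $n^{k-1}$ by a constant factor $1/(k-2)!<1$ precisely because $k\ge 4$. Thus, for large $n$, many leaf-parents must repeat a local pair $(c(v),S_v)$, and they can avoid a type collision only if the missing-color distance vectors $\bigl(d(\ell,C_i)\bigr)_{i\in M_v}$ separate them. The crux is therefore to bound how much distinguishing power these vectors supply and to show it is insufficient: the distances $d(\ell,C_i)=1+d(v,C_i)$ are values of $1$-Lipschitz distance-to-a-color-class functions on the tree, they are shared by all leaf-parents lying under a common grandparent, and whenever the grandparent subtree $T(n,2)$ already contains all $m$ colors they are pinned into $\{2,3,4\}$ and governed only by the grandparent's color and the colors of the sibling leaf-parents. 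I would make this precise to show that, across all leaf-parents sharing a fixed local pair, the number of distinct attainable distance vectors grows too slowly to compensate the deficit, so the total number of types stays $o(n^{k-1})$, contradicting the distinctness forced above.

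The main obstacle is exactly this last estimate. The naive bound $(2k-1)^t$ (or even $3^t$) on the number of distance vectors is a constant that exceeds $(k-2)!$, so it fails to close the gap; the argument must instead exploit the rigidity of the tree metric---the fact that these vectors are inherited from the coloring strictly above $v$ and cannot be chosen freely---to push the count below $n^{k-1}$. This delicacy, where the two counts already agree at leading order $n^{k-1}$ and only the sub-leading behaviour decides the outcome, is what confines the conclusion to \emph{almost all} $n$ (with finitely many exceptions) rather than all $n$, and what singles out the regime $k\ge 4$, where $(k-2)!\ge 2$ first makes the local count $m\binom{m}{n}$ drop below $n^{k-1}$.
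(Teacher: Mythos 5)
Your reduction is sound and is essentially the paper's: both arguments pigeonhole the $n^{k-1}$ palms by a local ``coloring type'' and then argue that too many palms of one type force a color-code collision (the paper collides the codes of the leaf-parents $v$ themselves, whose codes agree on all coordinates in $S_v\cup\{c(v)\}$ and have only $k-3$ free coordinates each bounded by the diameter $2k$; you collide the codes of their leaves, which works equally well once the distance vector is put into the type). But the proposal stops exactly where the proof must be completed, and the obstacle described in your last two paragraphs is an artifact of an over-count. Since the coloring is proper, $c(v)\notin S_v$, so the number of local pairs $(c(v),S_v)$ is not $m\binom{m}{n}\sim n^{k-1}/(k-2)!$ but
\[
m\binom{m-1}{n}=(n+k-2)\binom{n+k-3}{n}=(n+k-2)\binom{n+k-3}{k-3}=\Theta\bigl(n^{k-2}\bigr),
\]
whence $n^{k-1}\big/\bigl[(n+k-2)\binom{n+k-3}{n}\bigr]\to\infty$. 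For large $n$ the pigeonhole therefore yields not merely ``many'' but \emph{more than any prescribed constant} leaf-parents sharing one local pair --- in particular more than $(2k)^{k-3}$ of them --- and the crude constant bound on the remaining distance coordinates already produces two identical color codes. No control of which distance vectors are ``realizable,'' and no appeal to the rigidity of the tree metric, is needed; this is precisely how the paper concludes.

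So the gap is genuine as written: the decisive estimate is announced but not proved, and the route you propose for it (beating the constant $(k-2)!$ by restricting attainable distance vectors) is both unnecessary and not obviously tractable. It is closed by the single observation $c(v)\notin S_v$, after which your own framework goes through verbatim. Two smaller slips: the entries of your distance vector are not all $\ge 2$ (the coordinate $j=c(v)\in M_v$ equals $1$), and the role of $k\ge 4$ is not that $(k-2)!\ge 2$ pushes the local count below $n^{k-1}$; the cases $k\le 3$ are simply already settled exactly by Theorem~\ref{tnk13}.
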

	\begin{proof}
		To prove this theorem, we will prove that for every integer $k\geq 4$, there exists an integer $M_k$ such that if $n$ is an integer with $n\geq M_k$, then $\chi_L(T(n,k))=n+k-1$.
		Since $(n+k-2)\binom{n+k-3}{n}$ is a polynomial of order $k-2$ in $n$, then 
		\begin{align}\label{lim}
		\lim\limits_{n\to\infty}\frac{n^{k-1}}{(n+k-2)\binom{n+k-3}{n}}=\infty.
		\end{align}
		Since the limit of (\ref{lim}) is infinite then there exists an integer $M_k$ such that for $n\geq M_k$, we have $$\displaystyle \frac{n^{k-1}}{(n+k-2)\binom{n+k-3}{n}}>(2k)^{k-3}.$$ By Theorem \ref{tnk13}, $\chi_L(T(n,k)) \leq n+k-1,$ for $n \geq 4$. To prove that $\chi_L(T(n,k))=n+k-1$ for $n\geq M_k$, we only need to show that there is no locating $(n+k-2)$-coloring of $T(n,k)$. Suppose otherwise, let $c$ be a locating $(n+k-2)$-coloring of $T(n,k)$.
		
		Since $c$ is a locating coloring, all vertices in any palm of $T(n,k)$ will receive distinct colors. We say that two palms have the same coloring type if their local end-branches have the same color and their leaves have the same colors. The number of coloring types for all palms is $(n+k-2)\binom{n+k-3}{n}$.
		
		Since there are $n^{k-1}$ palms and $(n+k-2)\binom{n+k-3}{n}$ coloring types in $T(n,k)$, there is at least $$\displaystyle \left\lceil\frac{n^{k-1}}{(n+k-2)\binom{n+k-3}{n}}\right\rceil\geq(2k)^{k-3}+1$$ palms with the same coloring type. Without loss of generality, let these palms with the same coloring type have color one in the local end-branch and colors $2,3, \cdots, n+1$ in its leaves.
		For $i=1,2,\cdots, n+k-2$, let $a_i=d(v,C_i)$. Since the diameter of $T(n,k)$ is $2k$, then the color code of $v$ is $(0,1,\cdots,1,a_{n+2},\cdots,a_{n+k-2})$ with $a_i \leq 2k$ for $i=n+2,\cdots,n+k-2$. This means that there are at most $(2k)^{k-3}$ possible color codes for $v$. Since there are at least $(2k)^{k-3}+1$ palms in this coloring type and the number of possible color codes for its local-end branch is at most $(2k)^{k-3}$, then there are at least two local-end branches with the same color code, a contradiction. Therefore, for every $n\geq M_k$, we have $\chi_L(T(n,k))=n+k-1$.
	\end{proof}
	
	Another way of seeing Theorem \ref{Mk} is if we set $k\geq 4$ to be a fixed integer then when $n$ goes to infinity, the value of $\chi_L(T(n,k))$ goes to $n+k-1$.
	\begin{corollary}
		For any integer $k\geq4$,
		\[\lim\limits_{n\to\infty}\chi_L(T(n,k))-n=k-1.\]
	\end{corollary}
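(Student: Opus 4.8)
The plan is to read this off directly from Theorem~\ref{Mk}, which already carries all of the substantive content; the corollary is essentially a rephrasing of ``eventually equal to $n+k-1$'' in the language of limits. Fix an integer $k\geq 4$. Theorem~\ref{Mk} supplies an integer $M_k$ with the property that $\chi_L(T(n,k))=n+k-1$ for every $n\geq M_k$, with only finitely many exceptional values of $n$ below the threshold. Subtracting $n$ from both sides yields $\chi_L(T(n,k))-n=k-1$ for all $n\geq M_k$.

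From here I would define the sequence $a_n:=\chi_L(T(n,k))-n$ and observe that it is \emph{eventually constant}: it takes the single value $k-1$ for every index $n\geq M_k$. Since an eventually constant sequence converges to its eventual value, I conclude $\lim_{n\to\infty}a_n=k-1$, which is precisely the asserted limit. The only point worth stating explicitly is the elementary fact that the finitely many exceptional indices $n<M_k$ (where $\chi_L(T(n,k))$ may be strictly smaller than $n+k-1$) cannot influence the value of the limit, as this is immediate from the definition of convergence of a sequence.

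The main obstacle — indeed the only real work — has already been overcome inside the proof of Theorem~\ref{Mk}, namely the construction of the threshold $M_k$ through the pigeonhole comparison of the $n^{k-1}$ palms against the $(n+k-2)\binom{n+k-3}{n}$ coloring types and the at most $(2k)^{k-3}$ possible color codes for a local-end branch. Granting that theorem, there is no further difficulty to confront: the corollary is a one-line deduction. Accordingly, I would present it as a brief remark rather than an elaborate argument, emphasizing that it is simply the limit formulation of the eventual stabilization $\chi_L(T(n,k))=n+k-1$ established earlier.
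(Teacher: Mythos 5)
Your deduction is correct and matches the paper exactly: the paper states this corollary without proof as an immediate reformulation of Theorem~\ref{Mk}, and your observation that $\chi_L(T(n,k))-n$ is eventually constant equal to $k-1$ for $n\geq M_k$ is precisely the intended (and only needed) argument.
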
 

	Now, we study the value of $\chi_L{T(n,k)}$ when $k\to \infty$. For two functions $f,g:\mathbb{N}\to \mathbb{N}$, we say $f(k)=o(g(k))$ if and only if $\displaystyle \lim\limits_{k\to \infty} \frac{f(k)}{g(k)}=0$.
	
	\begin{theorem}\label{ok}
		For any fixed integer $n\geq 3$, $\chi_L(T(n,k))=o(k)$.
	\end{theorem}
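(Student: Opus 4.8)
The plan is to establish the (nontrivial) upper bound by explicitly constructing a locating coloring of $T(n,k)$ that uses $o(k)$ colors; note that the bound from Theorem~\ref{tnk13} is only linear and so does not suffice. Root the tree at its unique center $\rho$ (the vertex of eccentricity $k$), place the leaves at level $k$, and assign to each vertex $v$ at level $\ell$ its \emph{address} $a(v)=(a_1,\dots,a_\ell)\in\{1,\dots,n\}^{\ell}$, where $a_i$ records which child is taken at the $i$-th step of the root-path of $v$. Since $v$ is determined by the pair $(\ell,a(v))$, it suffices to design a proper coloring whose color code $c_\Pi(v)$ recovers both $\ell$ and $a(v)$; such a coloring is automatically locating. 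The heuristic necessary condition $(2k+1)^{m}\ge |V(T(n,k))|\approx n^{k}$ suggests $m=\Theta(k/\log k)$ colors should be attainable, and any construction meeting $m=o(k)$ proves the theorem.

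First I would fix a window width $\Delta=\Delta(k)\to\infty$ growing slowly (for concreteness $\Delta=\lceil\log k\rceil$) and cut the levels into $t=\lceil k/\Delta\rceil$ consecutive \emph{bands}, band $j$ being levels $(j-1)\Delta+1,\dots,j\Delta$. The address then splits into chunks $A_j=(a_{(j-1)\Delta+1},\dots,a_{j\Delta})\in\{1,\dots,n\}^{\Delta}$, one per band. The engine is a \emph{local encoding gadget}: a coloring of a single copy of $T(n,\Delta)$ by a palette $P$ of size
\[
p=\Big\lceil \tfrac{\Delta\log n}{\log(2\Delta+1)}\Big\rceil
\]
for which the $n^{\Delta}$ bottom leaves receive distance-signatures that are pairwise distinct \emph{up to a common additive shift}, and that a descendant hanging below a leaf can still read off. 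I would use the \emph{same} palette $P_j$ (a private, disjoint copy of $P$) simultaneously inside every band-$j$ block across the whole tree, so that only $t\cdot p$ colors are spent.

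The reuse is legitimate because of locality of tree distances. For $v$ at level $\ell\ge j\Delta$ let $u_j,w_j$ be its ancestors at levels $j\Delta$ and $(j-1)\Delta$; since all $P_j$-colored vertices lie in band $j$ and hence are not below $v$, the nearest $P_j$-vertex of each color is reached by climbing to $u_j$ and re-entering the block rooted at $w_j$, so
\[
d(v,C)=(\ell-j\Delta)+d_{\mathrm{block}}(u_j,C)\qquad (C\in P_j).
\]
Thus the relative distances $d(v,C)-d(v,C')$ for $C,C'\in P_j$ are independent of the depth of $v$ and, by the gadget, determine the chunk $A_j$. Dedicating one color to the single vertex $\rho$ makes $d(v,C_\rho)=\ell$, so the code recovers the depth exactly; knowing $\ell$ tells $v$ which bands lie above it, and the chunks $A_1,\dots,A_{\lfloor \ell/\Delta\rfloor}$ together with the partial chunk read locally inside $v$'s own band reconstruct the full address. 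Equal color codes therefore force equal $(\ell,a(\cdot))$, so the coloring is locating, and the color count is
\[
t\cdot p+O(1)=\Big\lceil\tfrac{k}{\Delta}\Big\rceil\Big\lceil\tfrac{\Delta\log n}{\log(2\Delta+1)}\Big\rceil+O(1)=O\!\left(\tfrac{k\log n}{\log\Delta}\right),
\]
which, divided by $k$, tends to $0$ as $\Delta\to\infty$; this is $o(k)$.

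The part I expect to fight hardest with is the honest construction of the local gadget together with its robustness, i.e.\ proving that the within-block signatures of the $n^{\Delta}$ leaves are genuinely distinct and are \emph{not} corrupted by $P_j$-colored vertices in sibling blocks. The clean identity for $d(v,C)$ needs every palette color to have a representative within the block at distance $\le\Delta$ of the bottom leaves, lest the nearest occurrence escape to a neighbouring block; the boundary case $\ell=j\Delta$, the partial final chunk, and properness at band interfaces all need care, but a constant number of extra colors and a thin $P$-free buffer between bands should absorb these without affecting the asymptotics. Verifying the gadget itself—most naturally via an explicit mixed-radix labelling of the $\Delta$ levels of $T(n,\Delta)$ realizing $\lceil\Delta\log n/\log(2\Delta+1)\rceil$ colors—is the technical heart, while the rest is bookkeeping in assembling the bands.
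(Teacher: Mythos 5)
Your overall strategy (cut the $k$ levels into $O(k/\Delta)$ bands, reuse one small palette across all blocks of the same band, and let a per-band ``gadget'' encode the address chunk into relative distances) is genuinely different from the paper's, and your information-theoretic accounting of why $o(k)$ colors ought to suffice is sound. But the argument has a real gap exactly where you predict it: the gadget is not constructed, and it is not a routine verification. You need a coloring of $T(n,\Delta)$ with $p=o(\Delta)$ colors under which the $n^{\Delta}$ bottom vertices --- in fact, by your own remark about the ``partial final chunk'', all vertices of the block --- have pairwise distinct within-block distance vectors. That is a resolving-partition statement of the same nature as the theorem being proved, only for a block instead of the whole tree; the counting bound $(2\Delta+1)^p\ge n^{\Delta}$ shows such a $p$ is not excluded, but it gives no construction, and a ``mixed-radix labelling'' does not obviously survive the fact that $d(v,C)$ is a minimum over the whole class $C$. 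Without this lemma the proof does not close.

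Second, the identity $d(v,C)=(\ell-j\Delta)+d_{\mathrm{block}}(u_j,C)$ is false in general, and the failure is not cured by a buffer between bands. Since every block of band $j$ carries the same palette $P_j$, the ancestor $u_j$ sitting at the bottom of its block can reach a $C$-colored vertex in a \emph{sibling} block of the same band at cost as low as $\Delta+3$ (climb $\Delta+1$ levels, descend $2$), whereas the nearest $C$-vertex inside its own block may be at distance up to $2\Delta$; so for $\Delta\ge 4$ the signatures you rely on are truncated at roughly $\Delta+O(1)$ by escapes into identically colored sibling blocks, and injectivity must be proved for these truncated vectors. The paper sidesteps both difficulties by going in the opposite direction: it pays exponentially many colors ($1+n^{t}+n^{2t-1}$) for a base tree of bounded depth, then shows in Lemma \ref{k+t} that growing the tree upward by $t$ levels costs only $2$ new colors, the $n^{t}$ hanging copies being separated by permuting which single color each copy omits; every distance comparison there is verified exactly, which is what your band-reuse step is missing. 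If you can supply the gadget and handle the truncation, your route would give a quantitatively explicit sublinear bound, but as written it is not a proof.
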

	
	To prove Theorem \ref{ok}, we need lemma \ref{k+t} below.
	Let $D_t(v)$ be the set of vertices with distance $t$ to $v$. Suppose that $v$ is the center of $T(n,k)$, a coloring $c$ of $T(n,k)$ is said to have a $c^t_{a,A}$ property if the following holds;
	\begin{enumerate}
		\item $c(v)=a\notin A$,
		\item $c(D_t(v))=A$ and $|A|=n^t$.
		\item For $1\leq i\leq t-1$, $c(D_i(v))=\{1,2\}$.
	\end{enumerate}

	\begin{lemma}\label{k+t}
		Suppose that $T(n,k)$ has a locating $m$-coloring with $c^t_{a,A}$ property, then $T(n,k+t)$ has a locating $(m+2)$-coloring with $c^t_{a',A}$ property.
	\end{lemma}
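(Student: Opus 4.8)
The plan is to view $T(n,k+t)$ as a fresh copy of $T(n,t)$ -- its top $t$ levels -- with one copy of $T(n,k)$ glued at each of its $n^t$ leaves, each leaf being identified with the centre of the $T(n,k)$ attached there. On the top part I would colour the centre $v'$ with a colour $a'\notin A$ (one may keep $a'=a$), the levels $1,\dots,t-1$ with $\{1,2\}$, and the $n^t$ leaves bijectively with $A$; this is exactly the pattern that the hypothesised coloring $c$ already induces on the top $t$ levels of $T(n,k)$, so it is available. On the copy glued at the leaf coloured $\alpha_i\in A$ I would reproduce $c$ after a relabeling $\tau_i$ of its colours that remaps the two waist colours $\{1,2\}$ to two fresh colours $\{m+1,m+2\}$ throughout the copy and installs $\alpha_i$ at the copy's centre. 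Because the centre of $T(n,k)$ is adjacent to level-$1$ vertices using both colours $1$ and $2$, properness forces $a\notin\{1,2\}$, so each $\tau_i$ can be taken to be a bijection of the $m$ old colours onto $m$ of the $m+2$ colours. By construction $c'(v')=a'\notin A$, $c'(D_t(v'))=A$ with $|A|=n^t$, and $c'(D_i(v'))=\{1,2\}$ for $1\le i\le t-1$, so $c'$ has the $c^t_{a',A}$ property and uses exactly the two extra colours $m+1,m+2$.

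It remains to show $c'$ is locating, i.e.\ that all color codes are distinct. The first ingredient is a separation by \emph{exclusive} colours: colours $1,2$ occur only in the top part (levels $\le t$), whereas colours $m+1,m+2$ occur only strictly below level $t$, inside the copies. Hence the distances to $C'_1,C'_2$ increase as one descends into a copy while the distances to $C'_{m+1},C'_{m+2}$ increase as one climbs toward $v'$, and this separates the top-part vertices from the copy vertices (the shared level-$t$ leaves being counted with the top part). Since the top part carries exactly the locating pattern that $c$ induces on $T(n,t)$, its vertices already receive pairwise-distinct codes.

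The heart of the matter is to separate vertices that lie in the copies. Each copy carries the relabeled locating coloring $\tau_i\circ c$, so for a vertex $u$ deep in copy $i$ the nearest representative of each colour present in the copy is attained inside the copy; the restriction of the code of $u$ to these colours then equals, through $\tau_i$, the original color code of the corresponding vertex $\bar u$ of $T(n,k)$, and since $c$ is locating this recovers the internal position $\bar u$. To recover the index $i$ I would use the $n^t$ centres, which carry the $n^t$ distinct colours of $A$: because any path between two different copies must climb to $v'$ and descend again, the distances from $u$ to the classes $C'_\beta$ with $\beta\in A$ should single out $u$'s own copy's centre as the nearest $A$-coloured vertex, identifying $i$. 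Two copy vertices with equal code would then share internal position and copy, hence coincide.

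I expect the main obstacle to be exactly this min-distance bookkeeping. One must check that for every copy vertex the minimum distance to each colour class is attained where the construction intends -- inside the copy for the colours $3,\dots,m+2$, and at the copy's own centre for the beacon colour $\alpha_i$ -- rather than being shortcut through $v'$ into another copy; this calls for care with the vertices near level $t$, where the ``upward'' and ``internal'' distances become comparable, and with the colour $a$, which $\tau_i$ may scatter into the copies and which could create a spurious nearer occurrence of some $\alpha_i$. Once these minima are shown to behave as claimed, the three separations combine to give pairwise-distinct color codes, so $c'$ is a locating $(m+2)$-coloring of $T(n,k+t)$ with the $c^t_{a',A}$ property, as required.
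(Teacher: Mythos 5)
Your decomposition of $T(n,k+t)$ into its top $t$ levels plus $n^t$ copies of $T(n,k)$ glued at level $t$, the count of $m+2$ colors, the verification of the $c^t_{a',A}$ property, and the reduction to three cases (top part versus copies, two vertices in the same copy, two vertices at the same level in different copies) all match the paper's proof. The genuine gap is precisely the one you flag in your last paragraph and do not close: your mechanism for the third case fails. Since each $\tau_j$ is a bijection of the $m$ old colors onto $\{3,\dots,m+2\}\supseteq A$, every beacon color $\alpha_i\in A$ reappears \emph{inside} every copy $T_j$, not only at the copy centers; for a vertex $w$ of $T_j$ at level $r$ the nearest $\alpha_i$-colored vertex may therefore lie inside $T_j$ at distance as small as $1$, so the intended inequality $d(w,C'_{\alpha_i})\ge (r-t)+2>r-t\ge d(u,C'_{\alpha_i})$ simply does not hold, and no other choice of beacon in $A$ repairs it (all copies carry exactly the same color set $\{3,\dots,m+2\}$ under your relabeling, so only distances to specific occurrences could distinguish them, and those are exactly what is confounded). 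This is not deferred bookkeeping; it is the missing idea.

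The paper closes this case with a different device built on the same skeleton. It does not spend the two new colors on the waists of the copies (it keeps colors $1,2$ there); instead it reserves $m+2$ for the root alone and enlarges the palette available to the copies to $\{1,\dots,m+1\}$, so that each copy $T_i$ can be \emph{denied} one designated color $a_i=i+n^{t-1}-1\pmod{n^t}+3$, chosen so that $a_i$ occurs in every other copy either at its center or at its level $t$, while its nearest occurrence to $T_i$ sits at level $2t$ of a sibling copy. Because $a_i$ is wholly absent from $T_i$, the minimum $d(u,C_{a_i})$ for $u\in T_i$ at level $r$ cannot be attained internally and is forced to equal $(r-t)+2+t=r+2$, whereas $d(w,C_{a_i})\le(r-t)+t=r$ for $w$ at level $r$ in any other copy. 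The moral, and the fix you would need, is that \emph{absence} of a color from a copy is what makes the relevant coordinate of the color code computable and uniform; the \emph{presence} of a beacon is not enough, because presence elsewhere can always undercut it.
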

	\begin{proof}
		Let $c$ be a locating $m$-coloring of $T(n,k)$ having $c^t_{a,A}$ property. Without loss of generality, let $a=m$ and $A=\{3,4,\cdots,n^t+2\}$.
		For $i=1,2,\cdots,n^t$, let $T_i$ be the $i^{th}$ copy of $T(n,k)$ in $T(n,k+t)$ as illustrated in figure \ref{Tnk+t}
		
\begin{figure}[!h]
	\begin{center}
			\begin{tikzpicture}[scale=0.2]
    	\draw (-8,-7)rectangle(55,25);
    	\draw (0,22)node{\large ${T(n,k+t)}$};
    	
		\draw[fill=black] (3,6)circle(0.3);
		\draw[fill=black] (8,6)circle(0.3);
		\draw[fill=black] (14,6)circle(0.3);
		\draw[fill=black] (17,6)circle(0.3);
		\draw[fill=black] (22,6)circle(0.3);
		\draw[fill=black] (28,6)circle(0.3);
		\draw[fill=black] (34,6)circle(0.3);
		\draw[fill=black] (40,6)circle(0.3);
		\draw[fill=black] (45,6)circle(0.3);
		\draw[fill=black] (51,6)circle(0.3);
		
		\node at (11,6){\ldots};
		\node at (25,6){\ldots};
		\node at (37,6){\ldots};
		\node at (48,6){\ldots};
		\node at (31,6){\ldots};
		\node at (33,14){\ldots};
		\node at (36,14){\ldots};

		\draw (1,-5)--(5,-5)--(3,6)--(1,-5);
		\draw (6,-5)--(10,-5)--(8,6)--(6,-5);
		\draw (12,-5)--(16,-5)--(14,6)--(12,-5);
		
		\draw (32,-5)--(36,-5)--(34,6)--(32,-5);
		\draw (49,-5)--(53,-5)--(51,6)--(49,-5);
		
		\draw (3,6)--(8.5,9);
		\draw (8,6)--(8.5,9);
		\draw (14,6)--(8.5,9);
		
		\draw (17,6)--(22.5,9); 
		\draw (22,6)--(22.5,9);
		\draw (28,6)--(22.5,9);
		
		\draw (40,6)--(45.5,9);
		\draw (45,6)--(45.5,9);
		\draw (51,6)--(45.5,9);
		
		\draw[fill=black] (8.5,9)circle(0.3);
		\draw[fill=black] (22.5,9)circle(0.3);
		\draw[fill=black] (45.5,9)circle(0.3);
		\draw[fill=black] (8.5,14)circle(0.3);
		\draw[fill=black] (22.5,14)circle(0.3);
		\draw[fill=black] (45.5,14)circle(0.3);
		
		\draw[fill=black] (26.5,22)circle(0.3)node[above]{$v$};
		\draw (22.5,14)--(26.5,22);
		\draw (8.5,14)--(26.5,22);
		\draw (45.5,14)--(26.5,22);
		
		\node at (8.5,12){\vdots};
		\node at (22.5,12){\vdots};
		\node at (45.5,12){\vdots};
		
		\node at (17,4){\vdots};
		\node at (22,4){\vdots};
		\node at (28,4){\vdots};
		\node at (40,4){\vdots};
		\node at (45,4){\vdots};

		\draw[fill=black] (2,0)circle(0.3);
		\draw[fill=black] (3,0)circle(0.3);
		\draw[fill=black] (4,0)circle(0.3);
		\draw[fill=black] (7,0)circle(0.3);
		\draw[fill=black] (8,0)circle(0.3);
		\draw[fill=black] (9,0)circle(0.3);
		\draw[fill=black] (13,0)circle(0.3);
		\draw[fill=black] (14,0)circle(0.3);
		\draw[fill=black] (15,0)circle(0.3);
		\draw[fill=black] (33,0)circle(0.3);
		\draw[fill=black] (34,0)circle(0.3);
		\draw[fill=black] (35,0)circle(0.3);
		\draw[fill=black] (50,0)circle(0.3);
		\draw[fill=black] (51,0)circle(0.3);
		\draw[fill=black] (52,0)circle(0.3);
		
		\node at (-3.5,0){\small level $2t$};
		\draw[->] (-0.5,0)--(1,0);
		\node at (-2,6){\small level $t$};
		\draw[->] (0.5,6)--(2,6);
		
		\node at (3,-3){$T_1$};
		\node at (8,-3){$T_2$};
		\node at (14,-3){$T_n$};
		\node at (34,-3){$T_i$};
		\node at (51,-3){$T_{n^t}$};
	\end{tikzpicture}
		\caption{Graph $T(n,k+t)$\label{Tnk+t}}
	\end{center}
\end{figure}
	
	\noindent Now we construct a locating $(m+2)$-coloring for $T(n,k+t)$.
	\begin{enumerate}
	    \item Let $v$ be the center of $T(n,k+t)$ and color $v$ with $m+2$.
	    \item For $i=1,2,\cdots,t$; color all vertices in $D_i(v)$ with $i \pmod 2 +1$.
	    \item For $i=1,2,\cdots,n^t$; color $T_i$ with colors in  $\left\{1,2,\cdots,m+1\right\}-$ \linebreak $\left\{i+n^{t-1}-1\pmod{n^t} +3\right\}$ by permuting the coloring $c$ of $T(n,k)$ such that the coloring have $c^t_{i+2,B}$ property, where\\ $B=\left\{3,4,\cdots,n^t+4\right\}-\left\{i+2,i+n^{t-1}-1\pmod{n^t} +3\right\}$.
	\end{enumerate}
	
    \noindent Here are some important properties of the previous coloring.
    \begin{enumerate}[label=(\arabic*)]
        \item The vertex $v$ is the only vertex in $T(n,k+t)$ to be colored $m+2$.
        \item All vertices in $D_i(v)$ for $i=1,2,\cdots,t-1,t+1,\cdots 2t-1$; is colored by $1$ and $2$.
        \item All vertices in $D_t(v)$ has different colors from $3,4,\cdots,n^t+2$.
        \item $T_i$ is missing color $i+n^{t-1}-1\pmod{n^t}+3$ which is used in every other $T_j$, either in the center or in the level $2t$.
    \end{enumerate}
    
    To prove that the coloring is a locating coloring, suppose there are vertices $u$ and $w$ with the same color code. 
    Because of property (1), $u$ and $w$ are at the same level. Since two vertices in level $1,2,\cdots,t$ are distinguished by colors in level $t$, and two vertices in the same $T_i$ are distinguished because $c$ is a locating coloring, then $u$ and $w$ must be in the same level at different $T_i$'s.
    
    Let $u$ and $w$ be in level $r$, $u$ in $T_i$ and $w$ in $T_j$. Since $T_i$ does not contain color $a=i+n^{t-1}-1\pmod{n^t}+3$, and the nearest vertex with color $a$ is in level $2t$ in $T_{i-1}$ or $T_{i+1}$, then $d(u,C_a)=(r-t)+2+t$. On the other hand, $T_j$ contains color $a$ either in level $t$ or $2t$, then $d(w,C_a)\leq(r-t)+t$ and hence $d(w,C_a)< d(u,C_a)$.
	\end{proof}
    
    \begin{proof}[Proof of Theorem \ref{ok}]
        We will prove that for every $\varepsilon$, there is an integer $K$ such that $k>K$ implies $\displaystyle\frac{\chi_L(T(n,k))}{k}<\varepsilon$.
        
        Let $t$ be an integer such that $\displaystyle\frac{2}{t}<\frac{\varepsilon}{2}$. For every $i\in\{t,t+1,\cdots,2t-1\}$, $T(n,i)$ has a locating coloring with $c^t_{a,A}$ property. This could easily be achieved by coloring all vertices in level $0$, $t$, and $i$ with different colors. Therefore $\chi_L{T(n,i)}\leq f(n,t):=1+n^t+n^{2t-1}$.
        Since $f(n,t)$ is not a function of $k$, there is an integer $K$ such that $k>K$ implies \begin{align}
            \frac{f(n,t)}{k}<\frac{\varepsilon}{2}.\label{1}
        \end{align}
        
        Consider $\chi_L(T(n,k))$ for $k>K$. Write $k=at+i$ where $i\in\{t,t+1,\cdots,2t-1\}$. From Lemma \ref{k+t} and (\ref{1}), $$\frac{\chi_L(T(n,k))}{k}\leq \frac{2a+f(n,t)}{k}=\frac{\frac{2(k-i)}{t}+f(n,t)}{k}\leq \frac{2}{t}+\frac{f(n,t)}{k}<\varepsilon.$$
    \end{proof}

	\section*{Acknowledgment}
	This research has been supported by "\textit{PPMI FMIPA ITB}" managed by Faculty of Mathematics and Natural Sciences, Institut Teknologi Bandung.

\bibliographystyle{plain}
\bibliography{ref}

\end{document}